\definecolor{myurlcolor}{rgb}{0,0,0.4}
\definecolor{mycitecolor}{rgb}{0,0.5,0}
\definecolor{myrefcolor}{rgb}{0.5,0,0}
\theoremstyle{plain}
\newtheorem{thm}{Theorem}
\newtheorem{lem}[thm]{Lemma}
\newtheorem{defn}[thm]{Definition}
\theoremstyle{remark}
\numberwithin{equation}{section}
\newcommand{\R}{\mathbb{R}}
\newcommand{\Prb}[1]{\mathbf{P}[#1]}
\newcommand{\PM}[1]{\mathcal{P}(#1)}
\newcommand{\beq}{\begin{equation}}
\newcommand{\eeq}{\end{equation}}
\begin{document}

\setlength{\jot}{6pt}

\title{Antisymmetry of the stochastic order\\ on all ordered topological spaces}

\author{Tobias Fritz}

\address{Perimeter Institute for Theoretical Physics, Waterloo, Canada}
\email{tfritz@pitp.ca}

\keywords{}

\subjclass[2010]{Primary: 60E15; Secondary: 28C15}

\thanks{\textit{Acknowledgements.} We thank Paolo Perrone for a fruitful and enjoyable ongoing collaboration, without which this result would not have been possible. We also thank the referee for their careful reading and pertinent suggestions. Most of this work was conducted while the author was with the Max Planck Institute for Mathematics in the Sciences.}

\begin{abstract}
In this short note, we prove that the stochastic order of Radon probability measures on any ordered topological space is antisymmetric. This has been known before in various special cases. We give a simple and elementary proof of the general result.
\end{abstract}

\maketitle

\section{Introduction}

Given two real-valued random variables $X$ and $Y$, one says that $X$ is below $Y$ in the \emph{usual stochastic order} if
\[
	\Prb{X \geq c} \, \leq \, \Prb{Y \geq c}
\]
for all $c \in \R$. Intuitively, if $X$ and $Y$ describe the return distributions of e.g.~two financial assets, then $Y$ is clearly at least as good as $X$ is. Since this order relation does not depend on the joint distribution, it really is an ordering on the set of probability measures on $\R$. It is also known under other names, such as \emph{(first-order) stochastic dominance}; in this note, we will simply speak of the \emph{stochastic order}. It is clear that the stochastic order is a preorder relation, i.e.~it is reflexive and transitive. It is also easy to see that it is an \emph{antisymmetric} relation on probability measures: the above inequality represents an inequality between cumulative distribution functions, and a probability measure is uniquely determined by its CDF. Hence if $X$ is stochastically dominated by $Y$ and conversely $Y$ by $X$, then their distributions must be the same.

The stochastic order has been generalized long ago to probability measures on ordered topological spaces~\cite{strassen,edwards,kellerer}. More recent investigations have focused on ordered metric spaces, exploiting and investigating interactions between the order and the metric~\cite{lawson,hll,ours_ordered}. Again it is not hard to see that the stochastic order is reflexive and transitive---although the latter is not necessarily obvious, depending on which characterization one uses as the definition (Theorem~\ref{chars}).

However, proving antisymmetry of the stochastic order takes a bit more work. Edwards~\cite[p.~59/71]{edwards} noted that it holds for compact spaces due to the Stone--Weierstra\ss{} theorem, and more generally for completely regular ordered spaces in the sense of Nachbin~\cite{nachbin} thanks to the existence of Nachbin compactifications. Hiai, Lawson and Lim proved recently that it holds for Radon probability measures of finite first moment on certain types of cones in Banach spaces~\cite[Theorem~4.3]{hll}. Although this is a special case of Edwards' result, their proof is different. Subsequently, we proved that antisymmetry also holds for finite first moment measures on a large class of metric spaces which we call \emph{L-ordered}~\cite{ours_ordered}, where L-orderedness is a natural compatibility condition between order and metric slightly stronger than closedness of the order. We also showed that the class of L-ordered spaces coincides with the class of ordered metric spaces which can be embedded into ordered Banach spaces. Since not every ordered Banach space is completely regular ordered, this antisymmetry result is not covered by Edwards'.

In this note, we prove that antisymmetry indeed holds for all Radon probability measures on all ordered topological spaces (Theorem~\ref{general_antisym}). The only required compatibility condition between the partial order and the topology is that the order must be closed.

\section{Ordered topological spaces and stochastic order}

Here we set up the necessary definitions and state some known equivalent characterizations of the stochastic order. In Section~\ref{main}, we then prove the antisymmetry of the stochastic order.

Recall the following standard definition:

\begin{defn}
	An \emph{ordered topological space} is a triple $(X,\mathcal{T},\le)$ where $(X,\mathcal{T})$ is a topological space and $(X,\le)$ is a partially ordered set, and such that the set of all ordered pairs
\beq
\label{ord_pairs}
	\{ (x,y) \in X \times X \mid x \le y \}
\eeq
is closed in $X \times X$.
\end{defn}

In the following, we also use the shorthand notation $\{\le\}$ for the set~\eqref{ord_pairs}. It is easy to see that, $X$ is necessarily Hausdorff since the set $\{\ge\} \cap \{\le\} \subseteq X \times X$ is closed by closedness of the order, and equal to the diagonal as the order is assumed partial~\cite[Proposition~1.2]{nachbin}. A set $S \subseteq X$ is an \emph{upper set} if $x\in S$ and $x\le y$ implies $y\in S$. Every set $S$ generates an upper set $\uparrow\! S$, the smallest upper set which contains $S$.

The following standard fact~\cite[Proposition~3.4]{nachbin} will be useful in our proof.

\begin{lem}
	Let $X$ be an ordered topological space. If $C \subseteq X$ is compact, then $\,\uparrow\! C$ is closed.
	\label{lemma}
\end{lem}

Writing $\PM{X}$ for the set of Radon probability measures on $X$, the stochastic order on $\PM{X}$ is defined by either one of the following equivalent characterizations.

\begin{thm}[Strassen, Kellerer, Edwards]
\label{chars}
Let $X$ be an ordered topological space. For $p,q\in\PM{X}$, the following are equivalent:
\begin{enumerate}
\item\label{upsets_closed} $p(C) \leq q(C)$ for every closed upper set $C\subseteq X$;
\item\label{upsets_open} $p(U) \leq q(U)$ for every open upper set $U\subseteq X$;
\item\label{monotones} For every monotone and bounded lower semi-continuous function $f : X \to \R$, we have
\[
	\int_X f(x) \, dp(x) \le \int_X f(x) \, dq(x).
\]
\item\label{joint} There is $r \in \PM{X\times X}$ supported on $\{\le\}$ such that the marginals of $r$ are $p$ and $q$, respectively.
\end{enumerate}
\end{thm}

Here, the equivalence of~\ref{upsets_closed} and~\ref{joint} is due to Strassen's theorem~\cite[Theorem~11]{strassen} for ordered Polish spaces, and due to Kellerer~\cite[Proposition~3.12]{kellerer} for all ordered topological spaces. The equivalence with~\ref{upsets_open} follows from this upon applying Kellerer's result to the opposite order and using $p(U) = 1 - p(\bar{U})$. For the equivalence with~\ref{monotones}, see Edwards~\cite[Theorem~7.1]{edwards}.

In the following, we will use~\ref{joint} as the most convenient characterization of the stochastic order.

\section{Main result}
\label{main}

\begin{thm}
For any ordered topological space $X$, the stochastic order on the Radon probability measures on $X$ is antisymmetric. 
\label{general_antisym}
\end{thm}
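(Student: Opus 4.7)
The plan is to combine Strassen's coupling characterization~\ref{joint} with a localization argument that reduces the problem to a compact subspace, on which Nachbin's classical theorem on compact pospaces supplies the needed monotone continuous functions.

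I would begin by reducing antisymmetry to the following key claim: \emph{if $p \in \PM{X}$ and $\pi \in \PM{X \times X}$ has both marginals equal to $p$ and satisfies $\pi(\{\le\}) = 1$, then $\pi$ is concentrated on the diagonal.} From the hypotheses $p \le q$ and $q \le p$, characterization~\ref{joint} yields couplings $r$ of $(p, q)$ and $r'$ of $(q, p)$ both supported on $\{\le\}$. Gluing $r$ and $r'$ along their common $q$-marginal via disintegration (routine for Radon measures on a metric space) produces $\sigma \in \PM{X \times X \times X}$ with marginals $(p, q, p)$ supported on $\{(x_0, x_1, x_2) : x_0 \le x_1 \le x_2\}$. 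Its $(1, 3)$-marginal $\pi$ is a coupling of $p$ with itself concentrated on $\{\le\}$; the key claim then places $\pi$ on the diagonal, so $X_0 = X_2$ $\sigma$-almost surely, and the sandwich $X_0 \le X_1 \le X_2$ forces $X_0 = X_1 = X_2$ $\sigma$-a.s. Comparing marginals yields $p = q$.

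To prove the key claim I would use inner regularity: since $\pi$ is Radon, it suffices to show $\pi(K) = 0$ for every compact $K \subseteq X \times X$ disjoint from the diagonal. Fix such a $K$ and let $L \subseteq X$ be the union of its two coordinate projections, a compact subspace of $X$ that is a compact pospace in the induced order. By Nachbin's theorem, $L$ is a normally ordered space: whenever $y \not\le x$ in $L$, the disjoint closed sets $\{l \in L : l \ge y\}$ and $\{l \in L : l \le x\}$ are separated by a continuous monotone function $L \to [0, 1]$ taking value $1$ on the former and $0$ on the latter. Compactness and metrizability of $L$ make the space of continuous monotone functions on $L$ separable in the supremum norm, so a density argument produces a \emph{countable} family $\{f_n\}$ of continuous monotone functions $L \to [0, 1]$ with the following separating property: whenever $y \not\le x$ in $L$, some $f_n$ satisfies $f_n(x) < f_n(y)$.

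I would then extend each $f_n$ to a bounded monotone function on all of $X$ by the formula
\[
\tilde f_n(z) \;=\; \inf\bigl\{f_n(l) : l \in L,\; l \ge z\bigr\}, \qquad \inf \emptyset := 1.
\]
Using compactness of $L$ together with closedness of $\{\le\}$, one verifies that each sublevel set $\{\tilde f_n \le c\}$ equals the downward closure in $X$ of the compact set $L \cap \{f_n \le c\}$, which is closed; hence $\tilde f_n$ is bounded, monotone, lower semi-continuous on $X$, and agrees with $f_n$ on $L$. Since both marginals of $\pi$ equal $p$,
\[
\int_{X \times X} \bigl(\tilde f_n(y) - \tilde f_n(x)\bigr)\, d\pi(x, y) \;=\; 0,
\]
and since $\pi(\{\le\}) = 1$ and $\tilde f_n$ is monotone, the integrand is non-negative $\pi$-a.e. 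Therefore $\tilde f_n(x) = \tilde f_n(y)$ $\pi$-a.e. Intersecting over the countably many indices $n$ produces a $\pi$-full-measure set on which $f_n(x) = f_n(y)$ for all $n$ whenever $x, y \in L$. On the other hand, every $(x, y) \in K$ lies in $L \times L$ with $x \ne y$ and (after removing the $\pi$-null set $K \setminus \{\le\}$) with $x \le y$, hence $y \not\le x$; so by construction of $\{f_n\}$ some $f_n$ separates $x$ and $y$, contradicting the almost-everywhere equality. Thus $\pi(K) = 0$, as required.

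The main obstacle is the mismatch between where the available monotone continuous functions live --- only on the compact subspace $L$ --- and where monotonicity must be tested, namely on all of $X \times X$ under $\pi$. Since one cannot expect a Tietze-type extension theorem for monotone continuous functions on a general ordered metric space, one has to settle for a monotone lsc Borel extension on $X$; the infimum formula above is the natural candidate, and verifying its lower semi-continuity is precisely where the compactness of $L$ pays off.
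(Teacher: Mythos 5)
Your argument is correct, but it follows a genuinely different route from the paper's. The paper reduces to a Polish space (completion, then the union of the two supports), uses the measurable projection theorem to make the upper set $T = \,\uparrow V$ universally measurable, and reaches a contradiction from $\bar p(T) < \bar q(T) \le \bar p(T)$ by evaluating the two couplings $r$ and $s$ directly on $T\times X$ and $X\times T$ --- no gluing, no separating functions. You instead glue $r$ and $s$ into a self-coupling $\pi$ of $p$ concentrated on $\{\le\}$, localize to a compact $K$ off the diagonal by inner regularity, and invoke Nachbin's order-Urysohn lemma on the compact pospace $L$, extending a countable separating family of monotone continuous functions to bounded monotone lower semi-continuous functions on all of $X$. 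What your route buys: every set you integrate over is Borel (sublevel sets of lsc functions are closed), so you avoid the measurable projection theorem and the passage to completed measures altogether; it also makes precise how Edwards' compact-space argument, mentioned in the introduction, bootstraps to arbitrary Radon measures via inner regularity. What it costs: the gluing step needs a disintegration, which is not automatic on an arbitrary metric space --- you should first restrict to a $\sigma$-compact set carrying $p$ and $q$ (or perform the paper's completion-and-support reduction) so that the standard Polish/Souslin disintegration theorem applies. In fact the gluing is avoidable: characterization~\ref{monotones} of Theorem~\ref{chars}, applied to $p\le q\le p$, already gives $\int \tilde f_n\,dp = \int \tilde f_n\,dq$, hence $\int\bigl(\tilde f_n(y)-\tilde f_n(x)\bigr)\,dr = 0$ for the original coupling $r$ of $(p,q)$, and your compact-localization argument then places $r$ itself on the diagonal.

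Two minor points. Your identity $\{\tilde f_n \le c\} = \,\downarrow\!\bigl(L\cap\{f_n\le c\}\bigr)$ holds only for $c<1$; for $c\ge 1$ the sublevel set is all of $X$ because of the convention $\inf\emptyset = 1$. Either way every sublevel set is closed, so lower semi-continuity is unaffected.
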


\begin{proof}

	We need to assume that we have Radon probability measures $p$ and $q$ with $p \leq q \leq p$, where these order relations are witnessed, respectively, by joint Radon probability measures $r$ and $s$ supported on the set $\{\le\} \subseteq X\times X$. 
We will prove that $r$ is supported on the diagonal. This then implies that for every Borel set $A\subseteq X$, we have
\[
	p(A) = r(A\times X) = r(A\times A) = r(X\times A) = q(A),
\]
and hence the claim $p = q$ follows. Here, the second and third equation hold because $r$ is supported on the diagonal, which gives $r(A \times (X\setminus A)) = 0 = r((X\setminus A) \times A)$.

In order to prove that $r$ is supported on the diagonal, we need to show that for any $x,y\in X$ with $x < y$ there is a neighbourhood $U \times V \ni (x,y)$ with $r(U \times V) = 0$. Indeed by closedness of the order relation, we can find neighbourhoods $U \ni x$ and $V \ni y$ with $x' \not\ge y'$ for all $x' \in U$ and $y' \in V$. In other words, $U \,\cap \uparrow\! V = \emptyset$. Then in order to show that $r(U \times V) = 0$, by inner regularity it is enough to prove $r(C) = 0$ for all compact $C \subseteq U \times V$. Taking $D \subseteq X$ and $E \subseteq Y$ to be the projections of $C$ onto the two factors, $D \times E$ is a compact set with $C \subseteq D \times E \subseteq U \times V$. It is therefore enough to prove $r(D \times E) = 0$, using the fact that $D \,\cap \uparrow\! E = \emptyset$.

Now by \Cref{lemma}, the set $\uparrow\! E$ is closed and in particular Borel. We can therefore compute
\begin{align}
	\begin{split}
		\label{ineq}
		p(\uparrow\! E) & = r(\uparrow\! E\times X) = r(\uparrow\! E \, \times \uparrow\! E) \\
		& \le r(\uparrow\! E \, \times \uparrow\! E) + r(D \times E) \le r(X \times \uparrow\! E) = q(\uparrow\! E).
	\end{split}
\end{align}
Here, the second equation is by upper closedness of $\uparrow\! E$ and the assumption that $r$ is supported on $\{\leq\}$; while the second inequality holds because of $D \, \cap \uparrow\! E = \emptyset$. On the other hand, using the other joint $s$ gives
\begin{align*}
	q(\uparrow\! E) & = s(\uparrow\! E \times X) = s(\uparrow\! E \, \times \uparrow\! E) \\
	& \leq s(X\times \uparrow\! E) = p(\uparrow\! E).
\end{align*}
Therefore the inequalities in~\eqref{ineq} are all equalities, giving in particular the claimed $r(D \times E) = 0$. Hence $r$ is indeed supported on the diagonal.
\end{proof}

The argument given in this proof is really the obvious argument that one would make in very simple cases, e.g.~when proving that the stochastic order is antisymmetric on any finite partially ordered set.

\bibliographystyle{plain}
\bibliography{catprob}

\end{document}